\tikzset{arc/.style = {->,> = stealth'}}
\def\N{\mathbb N}
\def\P{\mathcal P}
\def\Q{\mathcal Q}
\def\R{\mathcal R}
\def\F{\mathcal F}
\def\P{\mathcal P}
\def\E{\mathcal E}
\def\C{\mathcal C}
\def\eps{\varepsilon}
\newcommand{\braces}[1]{\lbrace #1 \rbrace}
\newcommand{\verts}[1]{\vert #1 \vert}
\newcommand{\tribe}{\mathcal{F}}
\newtheorem{theorem}{Theorem}[section]
\newtheorem{lemma}[theorem]{Lemma}
\newtheorem{proposition}[theorem]{Proposition}
\theoremstyle{definition}
\newtheorem{problem}[theorem]{Problem}
\theoremstyle{remark}
\newtheorem*{claim}{Claim}
\title{Ubiquity of Oriented Rays}
\author{Florian Gut
    \and
        Thilo Krill
    \and
        Florian Reich}
\address{Universit\"at Hamburg, Department of Mathematics, Bundesstrasse 55 (Geomatikum), 20146 Hamburg, Germany}
\email{\{florian.gut, thilo.krill, florian.reich\}@uni-hamburg.de}
\keywords{ubiquity, directed graph, digraph, ray, $G$-tribes}
\begin{document}

\begin{abstract}
     A digraph $H$ is called \emph{ubiquitous} if every digraph $D$ that contains $k$ vertex-disjoint copies of $H$ for every $k \in \mathbb{N}$ also contains infinitely many vertex-disjoint copies of $H$.
    We characterise which digraphs with rays as underlying undirected graphs are ubiquitous.
\end{abstract}

\maketitle

\section{Introduction}

A (di)graph $H$ is called $\trianglelefteq$-\emph{ubiquitous} for a binary (di)graph relation $\trianglelefteq$ if any (di)graph $G$ that contains $k$ disjoint copies of $H$ for every $k \in \N$ also contains infinitely many disjoint copies of $H$ with respect to $\trianglelefteq$.
Possible relations for $\trianglelefteq$ are e.g.\ the subgraph, topological minor or minor relation for graphs or the subdigraph relation for digraphs.

Halin started the investigation of ubiquity in graphs with his landmark result that rays are subgraph-ubiquitous in~\cite{halin1965}.
Andreae conjectured that every locally finite connected graph is minor-ubiquitous after studying minor-ubiquity in \cites{Andreae2002,Andreae2013}.
Noteworthy progress towards this conjecture was recently achieved by Bowler, Elbracht, Erde, Gollin, Heuer, Pitz and Teegen in a series of papers~\cites{BEEGHPT22, BEEGHPT18, BEEGHPT20}, in which they proved, among several other results, that all trees are topological-minor-ubiquitous.
Throughout the years several results proving and disproving the ubiquity of certain graphs have been published, including results concerning different notions of ubiquity as in \cites{bowler2013,KURKOFKA2021103326}.

\begin{figure}[h]
    \center
	\begin{tikzpicture}[scale=1.8]
		\foreach \x in {0,1,2,3,4,5} \foreach \y in {0,0.8} \draw[fill] (\x,\y) circle (1.2pt);
		\foreach \x in {0,1,2,3,4,5} \draw[thick] (\x,0) -- (\x,0.8);
		\draw[thick] (0,0) -- (5.5,0) node[right] {$\dots$};
	\end{tikzpicture}
	\caption{The comb, a graph that is not subgraph-ubiquitous.}\label{fig:comb}
\end{figure}
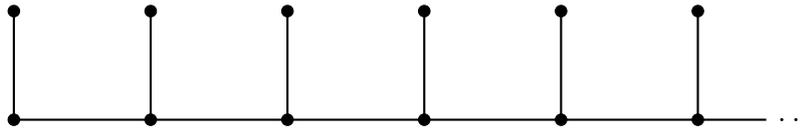

An example for a graph that is not subgraph-ubiquitous is the comb \cite{Andreae2002} (see \Cref{fig:comb}).
For graphs that are not topological-minor-ubiquitous, see~\cite{Andreae2013}.
Very recently, Carmesin provided an example of a locally finite graph that is not minor-ubiquitous in \cites{carmesin2022}.
We remark that this does not contradict Andreae's conjecture as the graph is not connected.

In \cite{bowler2013} Bowler, Carmesin and Pott first suggested the topic of ubiquity in digraphs by asking whether any digraph containing arbitrarily many edge-disjoint directed double rays also contains infinitely many of them.

We take on the quest of investigating ubiquity in digraphs.
We characterise which \emph{oriented rays}, i.e.\ digraphs whose underlying undirected graphs are rays, are ubiquitous regarding the subdigraph relation.
Whenever we write \emph{ubiquitous} without specifying the relation, we refer to the subdigraph relation.
Furthermore we call an arc of an oriented ray $R$ \emph{in-oriented} if it is directed towards the unique vertex of $R$ with undirected degree $1$ and \emph{out-oriented} otherwise.
Our main result reads as follows:

\begin{theorem}\label{theo:main_theo}
An oriented ray is ubiquitous if and only if either all but finitely many arcs are in-oriented or all but finitely many arcs are out-oriented.
\end{theorem}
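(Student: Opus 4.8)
The plan is to prove the two implications separately, and throughout I would exploit the symmetry that reversing every arc of every digraph is a ubiquity-preserving involution: it carries copies of $R$ to copies of the reversed ray $\overline{R}$, preserves vertex-disjointness, and interchanges the labels \emph{in-oriented} and \emph{out-oriented}. Hence the two clauses ``all but finitely many in-oriented'' and ``all but finitely many out-oriented'' are interchangeable, and for the sufficiency direction I may assume that all but finitely many arcs of $R$ are out-oriented, so that $R$ decomposes as a finite oriented path $S$ followed, from some vertex $v_N$ on, by a directed ray $\vec R = v_N \to v_{N+1} \to \cdots$.

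For sufficiency I would work inside the $G$-tribe framework. The hypothesis directly furnishes a thick $R$-tribe in the host $D$, and the goal is an \emph{extraction lemma}: from any thick $R$-tribe one can remove the vertex set of a single copy of $R$ so that a thick subtribe survives; iterating then produces infinitely many pairwise-disjoint copies. The engine of this lemma is the ubiquity of the bare directed ray $\vec R$, and the monotone out-orientation is exactly what makes the extraction succeed: the directed tails of the copies run cofinally towards a common end, so a copy can always be selected in a ``thick'' part of $D$, and its finitely many collisions with the rest of the tribe can be refined away using the standard fact that a countable union of thin tribes is thin. The finite prefix $S$ is then harmless, since each copy carries only the bounded extra vertex set of $S$; this does not affect the thickness bookkeeping, so the same extraction argument run on $R$-tribes (rather than on $\vec R$-tribes) delivers infinitely many disjoint copies of the full ray.

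For necessity I must construct, for a ray $R$ with infinitely many in-oriented and infinitely many out-oriented arcs, a connected host $D$ that contains $k$ disjoint copies of $R$ for every $k$ but not infinitely many. Two remarks shape the construction. First, a disjoint-union host is useless, since it would admit a diagonal infinite family. Second, by Halin's theorem no purely undirected obstruction can exist: each family of $k$ disjoint copies yields $k$ disjoint underlying rays, so the underlying graph of $D$ necessarily contains infinitely many disjoint rays, and the obstruction must therefore live entirely in the orientation. Accordingly I would build $D$ as an orientation-engineered tree-like digraph of growing level-widths, arranged so that the infinitely many forced direction reversals of $R$ (its alternating sources and sinks) can be realised in arbitrarily large finite families by spending the extra width available at deeper levels, while a ``shift'' in the incidence pattern forces every copy to be cofinal and to consume the shared turning-resources so densely that no infinite disjoint family can coexist, in the same spirit as the comb of \Cref{fig:comb}. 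In tribe language, one engineers a thick $R$-tribe that is concentrated towards a single end yet admits no extraction.

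I expect this necessity construction, together with its verification, to be the main obstacle. The two requirements pull in opposite directions: the widths must grow without bound to host arbitrarily many disjoint copies, yet every copy must use the shared turning-resources densely enough to rule out an infinite family. Producing the host is delicate, but the genuinely hard part is the negative half, namely proving that $D$ contains no infinite family of disjoint copies despite carrying a thick tribe; this is precisely the ``concentration without extraction'' phenomenon, and establishing it should require the full $G$-tribe machinery to quantify how each additional disjoint copy exhausts the finite resources present at each level.
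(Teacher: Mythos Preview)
Your sufficiency sketch has a real gap. The ``extraction lemma'' you propose---remove the vertex set of one copy of $R$ and retain a thick subtribe---does not hold as stated: a single copy of $R$ has infinitely many vertices and may meet every member of every layer, so nothing forces its ``finitely many collisions'' to be finite. The paper does not argue by extraction; it runs a Halin-style iterative construction (Theorem~3.2) that builds the out-rays one segment at a time. More importantly, your claim that ``the finite prefix $S$ is harmless'' hides the genuine difficulty. Even after you obtain infinitely many disjoint directed tails, the finite prefixes that you must re-attach can overlap with one another and with the tails; nothing in your bookkeeping prevents this. The paper's key device here is Lemma~3.3: from any thick $R$-tribe one first passes to a thick subtribe that is \emph{forked} at the finite prefix $\hat R$ (all copies of $\hat R$ pairwise disjoint across the whole tribe), deletes the union $Y$ of these prefixes, and only then runs the Halin argument on the tails inside $D-Y$ with the endpoints of the prefixes as the prescribed start set $X$. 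Without this forked-subtribe step your argument does not close.

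For necessity your plan is a wish list rather than a construction, and it points in the wrong direction. The paper's host $D$ is not tree-like and has no levels: it takes pairwise disjoint copies $R(n,m)$ indexed by $\{(n,m):n\le m\}$ and identifies, for each ordered pair with $m^0<m^1$, infinitely many single vertices of $R(n^0,m^0)$ with single vertices of $R(n^1,m^1)$. The $m$-coordinate then gives $k$ disjoint copies for each $k$ (the rays $R(0,m),\dots,R(m,m)$), while any two copies with different $m$ share infinitely many vertices, so an infinite disjoint family is impossible once one shows that every copy of $R$ in $D$ has a tail contained in some $R(n,m)$. That verification uses no tribe machinery at all; the identification vertices are placed so that a copy of $R$ cannot switch from one $R(n,m)$ to another without producing a phase of forbidden length (when the representing sequence is bounded) or without violating a rigidity coming from the fact that distinct tails of $R$ are non-isomorphic (when it is unbounded). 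This bounded/unbounded case split is essential, requires two genuinely different gluing strategies, and is entirely absent from your outline.
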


In this paper we develop novel methods that enhance the common techniques in the field of ubiquity theory.
For the backward implication of \Cref{theo:main_theo} (see \Cref{section:positive}) we follow the proof for Halin's ray ubiquity result in undirected graphs which can be found in \cite{diestel2017}*{Theorem 8.2.5 (i)}.
To make this possible we require sets of arbitrarily many disjoint copies of an oriented ray that have an additional property, they must be forked.
The existence of such forked sets, which we prove in \Cref{lem:disjoint_initial_segments}, is our key contribution in the proof of the forward implication.

For the forward implication of \Cref{theo:main_theo} (see \Cref{section:negative}) we construct a counterexample from infinitely many disjoint copies of an oriented ray by identifying vertices.
In the proof of \Cref{theo:negative_result_no_period} we extend the common technique of identifying vertices by a recursive choice of the vertices which will be identified.

Let us present some open problems concerning the ubiquity of different digraphs.
We call digraphs, whose underlying undirected graphs are double rays, \emph{oriented double rays}.
\begin{problem}
Which oriented double rays are ubiquitous?
\end{problem}
A special case that turns out to be quite challenging is:
\begin{problem}
Is the consistently oriented double ray, i.e.\ every vertex has in-degree and out-degree 1, ubiquitous?
\end{problem}

More generally, one can investigate ubiquity of digraphs whose underlying undirected graphs are trees.
However, even the question which undirected trees are subgraph-ubiquitous is unsolved and only known for ubiquity with respect to weaker relations such as the topological minor relation.
Therefore it might be sensible also to discuss ubiquity of digraphs with respect to weaker relations such as butterfly minors.
Moreover, since proving or disproving the ubiquity of consistently oriented double rays is not easy, we propose to initially consider \emph{out-trees}, i.e.\ trees in which all arcs are oriented away from the root.
\begin{problem}
Which out-trees are ubiquitous concerning a fitting notion of ubiquity?
\end{problem}

\section{Preliminaries}\label{section:preliminaries}

For general graph theoretic notation we refer to~\cite{bang2008} and~\cite{diestel2017}.
From now on we write $\emph{ray}$ instead of oriented ray.
Similarly, a \emph{path} is a digraph whose underlying undirected graph is a path.
We call a path a \emph{dipath} if all its arcs are consistently oriented, i.e.\ each vertex has in- and out-degree at most $1$.
Moreover, an \emph{out-ray} is a ray in which all arcs are out-oriented.

For $n \in \N$ we denote $[n] := \braces{1, \dots , n}$.
A vertex of a ray which is incident with two outgoing or two incoming arcs is called a \emph{turn}.
A maximal dipath contained in a ray is called a \emph{phase}.
We call a phase of a ray \emph{in-oriented} if its arcs are in-oriented and \emph{out-oriented} otherwise.

A ray that has infinitely many in-oriented and infinitely many out-oriented arcs can be represented by a sequence of natural numbers where the $n$-th term of the sequence represents the length of the $n$-th phase.
We call this the \emph{representing sequence} of the ray.
The representing sequence is \emph{bounded} if there is $b \in \N$ such that all elements of this sequence are contained in $[b]$.
Otherwise the representing sequence is called \emph{unbounded}.

For a digraph $D$ and vertices $v,w\in D$, we write $d_D(v,w)$ for the distance between $v$ and $w$ in the underlying undirected graph.
Let $R\subseteq D$ be a ray.
For any $v \in R$ we write $vR$ for the tail of $R$ starting in the vertex $v$, and for any $a \in A(R)$ we write $aR$ for the tail of $R$ starting with the arc $a$.
For $w \in D$ we say that $v \in R$ lies \emph{beyond $w$ on $R$} if $w\notin V(vR)$.
Two rays $R_1, R_2$ \emph{traverse an arc $uv \in A(R_1)\cap A(R_2)$ in the same direction} if either $v$ lies beyond $u$ on $R_1$ and $R_2$ or $u$ lies beyond $v$ on $R_1$ and $R_2$.
Otherwise $R_1$ and $R_2$ \emph{traverse $uv$ in opposite directions}.

Let $\Dv$ be the digraph obtained from $D$ by changing the orientation of every arc.
To warm up with the definition of ubiquity, we prove the following simple lemma:
\begin{lemma}\label{lem:change_orientation}
A digraph $H$ is ubiquitous if and only if $\Hv$ is ubiquitous.
\end{lemma}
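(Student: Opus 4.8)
The plan is to exploit that reversing every arc is an involution on digraphs: applying the operation $D \mapsto \Dv$ twice returns the original digraph $D$. Because of this symmetry it suffices to prove just one of the two implications, say that ubiquity of $\Hv$ forces ubiquity of $H$; the converse then follows at once by substituting $\Hv$ for $H$ and using that the reversal of $\Hv$ is again $H$.

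The only technical ingredient is a correspondence between copies. For any digraph $D$, the reversal $\Dv$ has the same vertex set as $D$, and reversing the arcs of a subdigraph $S \subseteq D$ produces a subdigraph of $\Dv$ on the same vertices, which I will call $S'$. I would show that $S \mapsto S'$ is a bijection between the subdigraphs of $D$ isomorphic to $H$ and the subdigraphs of $\Dv$ isomorphic to $\Hv$. Indeed, if $\phi \colon V(H) \to V(S)$ witnesses $H \cong S$, then for all vertices $u,v$ we have $uv \in A(\Hv)$ iff $vu \in A(H)$ iff $\phi(v)\phi(u) \in A(S)$ iff $\phi(u)\phi(v) \in A(S')$, so the same vertex map $\phi$ witnesses $\Hv \cong S'$. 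Since $S$ and $S'$ have identical vertex sets, this correspondence preserves vertex-disjointness of families of copies.

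With this in hand the conclusion is immediate. Assume $\Hv$ is ubiquitous and let $D$ be any digraph containing $k$ vertex-disjoint copies of $H$ for every $k \in \N$. Transporting each such family through the correspondence, $\Dv$ contains $k$ vertex-disjoint copies of $\Hv$ for every $k \in \N$, so ubiquity of $\Hv$ yields infinitely many vertex-disjoint copies of $\Hv$ in $\Dv$. Applying the inverse correspondence (equivalently, running the same argument for $\Dv$, whose reversal is $D$) turns these into infinitely many vertex-disjoint copies of $H$ in $D$. Hence $H$ is ubiquitous.

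I do not expect a genuine obstacle here, as this is a warm-up lemma formalising an evident symmetry. The one point that deserves care is the verification that arc-reversal really sends copies of $H$ to copies of $\Hv$ — and not to copies of some other orientation — while leaving vertex sets, and therefore disjointness, untouched; once that bijection is recorded, both implications are the same short argument read in opposite directions.
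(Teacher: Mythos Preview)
Your proof is correct and follows essentially the same approach as the paper: reduce to one implication via the involution $D\mapsto\Dv$, then transport families of disjoint copies back and forth between $D$ and $\Dv$. The only cosmetic difference is that you prove the implication ``$\Hv$ ubiquitous $\Rightarrow$ $H$ ubiquitous'' while the paper proves the reverse, and you spell out the copy-to-copy bijection more explicitly than the paper does.
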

\begin{proof}
It suffices to show that $\Hv$ is ubiquitous if $H$ is ubiquitous.
Let $D$ be any digraph containing arbitrarily many disjoint copies of $\Hv$.
Hence $\Dv$ contains arbitrarily many disjoint copies of $\cev{\Hv}=H$.
Then $\Dv$ also contains infinitely many disjoint copies of $H$ since $H$ is ubiquitous.
Therefore $\cev{\Dv}=D$ contains infinitely many disjoint copies of $\Hv$, which proves that $\Hv$ is ubiquitous.
\end{proof}
Corresponding to \cite{BEEGHPT22}*{Definition 5.1}, for digraphs $D$ and $R$ we call a collection $\tribe$ of finite sets of disjoint copies of $R$ in $D$ an \emph{$R$-tribe in $D$}.
If $R$ is clear from the context we may also just say \emph{tribe} instead, similarly for the containing graph $D$.
Furthermore, for an $R$-tribe $\tribe$ in $D$ we call $F \in \tribe$ a \emph{layer of $\tribe$}, any element of $F$ a \emph{member of $\tribe$} and say that $\tribe$ is \emph{thick} if for each $n \in \N$ there is a layer $F$ of $\tribe$ with $\verts{F} \geq n$.
Note that if $D$ contains arbitrarily many disjoint copies of $R$, then $D$ contains a thick $R$-tribe.
A tribe $\tribe'$ in $D$ is an \emph{($R$-)subtribe}\footnote{
Note that this definition of subtribe corresponds to the notion of flat subtribe in \cite{BEEGHPT22}. The definition of subtribe in \cite{BEEGHPT22} however is different and more general.} of an $R$-tribe $\tribe$ in $D$ if every layer of $\tribe'$ is a subset of a layer of $\tribe$.

In this paper, whenever we consider a copy $R'$ of a digraph $R$ we implicitly fix an isomorphism $\varphi\colon R \to R'$ and for a subdigraph $\hat{R} \subseteq R$ we write in short $\hat{R}'$ for $\varphi(\hat{R})$.
With this, we say that an $R$-tribe $\tribe$ is \emph{forked at} $\hat{R}$ if $\hat{R}' \cap {R}'' = \emptyset$ for any two distinct members $R', R''$ of $\tribe$.

\section{Positive results}\label{section:positive}

In this section, we prove

\begin{theorem}\label{theo:positive_result}
A ray is ubiquitous if all but finitely many arcs are in-oriented or all but finitely many arcs are out-oriented.
\end{theorem}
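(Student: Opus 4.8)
The plan is to reduce to a single orientation type and then transport Halin's classical ray-ubiquity argument into the directed setting, using forked tribes to control the finitely many ``bad'' arcs. First I would invoke \Cref{lem:change_orientation}: reversing every arc of a ray swaps in- and out-orientation while fixing the degree-$1$ endpoint, so $R$ is ubiquitous if and only if its reverse is, and I may therefore assume that all but finitely many arcs of $R$ are out-oriented. Such an $R$ decomposes canonically as a finite initial segment $\hat R$ --- chosen to contain every in-oriented arc and to end at the last turn --- followed by an out-ray tail, along which the traversal direction always coincides with the arc orientation. Given a digraph $D$ with arbitrarily many disjoint copies of $R$, i.e.\ a thick $R$-tribe, the goal is to produce infinitely many disjoint copies.

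The second step is to improve the tribe. I would apply \Cref{lem:disjoint_initial_segments} to pass to a thick $R$-tribe $\tribe$ in $D$ that is forked at $\hat R$, so that the initial segments $\hat R'$ of distinct members are pairwise disjoint and each is disjoint from every other member. This isolates the finitely many exceptional arcs: every member now consists of a ``private'' copy of $\hat R$ together with an out-ray tail, and the only genuinely subtle obstacle --- orientation during rerouting --- has been quarantined in the tails.

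Next I would emulate the construction in \cite{diestel2017}*{Theorem 8.2.5 (i)}. Building the infinitely many copies recursively, I maintain the invariant that after committing to disjoint copies $C_1,\dots,C_n$ there remains a thick forked subtribe of $\tribe$ whose members avoid $C_1\cup\dots\cup C_n$; from this reservoir I extract one further copy and thin it back to a thick reservoir disjoint from the new copy. The rerouting that keeps the reservoir thick is where Halin's argument is adapted to digraphs: whenever two out-ray tails share a vertex $v$, splicing the part of one tail ending at $v$ with the part of the other leaving $v$ again yields a directed ray (an incoming arc of $v$ followed by an outgoing arc of $v$), so the orientation bookkeeping that is delicate in general becomes automatic for out-ray tails. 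To turn a rerouted tail back into a genuine copy of the full $R$, I prepend a fresh copy $\hat R'$ of the exceptional initial segment; the forked structure guarantees an unused such segment disjoint from everything already used, which is exactly the ingredient the undirected proof does not require.

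The main obstacle, accordingly, is not the rerouting of the tails but the reattachment of admissible initial segments while preserving disjointness and thickness simultaneously: each rerouting step consumes tails and initial segments from several members, and I must ensure that the supply of clean, pairwise-disjoint copies of $\hat R$ never runs out and that the reservoir stays thick throughout the recursion. This is precisely what being forked at $\hat R$ secures, and verifying that \Cref{lem:disjoint_initial_segments} combines with the Halin recursion without exhausting the reservoir is the crux; the remaining orientation checks reduce, by the reversal reduction and the out-ray structure of the tails, to routine bookkeeping.
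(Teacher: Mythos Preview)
Your overall architecture matches the paper's: reduce to the out-oriented case via \Cref{lem:change_orientation}, split $R$ into the finite segment $\hat R$ plus an out-ray tail, pass to a tribe forked at $\hat R$ via \Cref{lem:disjoint_initial_segments}, run a Halin-type argument on the tails, and reattach the private initial segments. The observation that splicing two out-ray tails at a common vertex always produces an out-ray is exactly the orientation check that makes the directed version go through, and the paper uses it in the same way.

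Where your sketch goes wrong is the recursion itself. The invariant you state --- ``after committing to disjoint copies $C_1,\dots,C_n$ there remains a thick forked subtribe of $\tribe$ whose members avoid $C_1\cup\dots\cup C_n$'' --- is not Halin's invariant and is not maintainable: each $C_i$ is an \emph{infinite} ray and may meet arbitrarily many members of every layer, so there is no reason a thick subtribe avoiding it should survive. ``Thinning the reservoir'' works when the committed objects are finite, not here. Halin's actual scheme (as in \cite{diestel2017}*{Theorem~8.2.5~(i)} and the paper's \Cref{theo:ubiquity_with_starting_set}) never thins the tribe; instead it keeps $n$ provisional rays whose \emph{finite} initial segments $R^n_k u^n_k$ are frozen, picks at stage $n+1$ a single layer large relative to those finite segments, and reroutes the tails of the provisional rays through that layer. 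The committed objects that must be avoided are finite at every stage, which is what makes the counting go through.

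Relatedly, ``prepend a fresh copy $\hat R'$'' is slightly off: you cannot attach an arbitrary unused $\hat R'$ to a rerouted tail --- the tail must start at the first vertex of some member's out-ray part, and you must attach the specific $\hat R'$ sitting there. The paper handles this cleanly by first deleting $Y:=\bigcup_{R'}V(\hat R')$, recording the attachment points as a set $X\subseteq V(D)$, and then invoking \Cref{theo:ubiquity_with_starting_set}, whose whole point is that Halin's rerouting preserves the start vertex of each provisional ray. The forked condition then guarantees the $\hat S_i$ you reattach are pairwise disjoint and disjoint from everything in $D-Y$. Your plan becomes correct once you replace the reservoir invariant by the growing-initial-segment invariant and track start vertices as the paper does.
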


The proof of \Cref{theo:positive_result} will be an easy consequence of \Cref{theo:ubiquity_with_starting_set} together with \Cref{lem:disjoint_initial_segments} below.
The following \lcnamecref{theo:ubiquity_with_starting_set} is a variant of Halin's ray ubiquity result for digraphs with an additional restriction on the start vertices of the rays.
The proof given here is derived from the proof of Halin's result in \cite{diestel2017}*{Theorem 8.2.5 (i)}.

\begin{theorem}\label{theo:ubiquity_with_starting_set}
    Let $D$ be a digraph and $R$ an out-oriented ray.
    If there exists a thick $R$-tribe $F$ in $D$ and $X \subseteq V(D)$ such that each member of F has its first vertex in $X$, then there are infinitely many disjoint out-rays in $D$ whose first vertices are contained in $X$.
\end{theorem}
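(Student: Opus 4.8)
The plan is to adapt Halin's classical proof that rays are subgraph-ubiquitous (as in Diestel, Theorem 8.2.5(i)) to this directed setting, tracking the extra constraint that every out-ray must begin in $X$. Recall that in the undirected argument one builds infinitely many disjoint rays \emph{greedily}: having chosen finitely many disjoint rays $R_1, \dots, R_n$ using up only a finite vertex set $S = V(R_1 \cup \dots \cup R_n)$, one exploits a layer of the tribe that is wide enough that, even after deleting $S$, many copies survive untouched, and one extracts a fresh ray disjoint from $S$. Here I would do the same, but I must ensure the new ray is an out-ray starting in $X$.

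First I would fix an enumeration of the vertices of $D$ and proceed by recursion, maintaining at each stage $n$ a finite set of pairwise disjoint out-rays $R_1, \dots, R_n$, each beginning in $X$, together with the finite set $S_n = V(R_1) \cup \dots \cup V(R_n)$ they occupy. The key observation driving the recursion is finiteness: since $S_n$ is finite and each member of a layer is vertex-disjoint from the others, any single member of $F$ can meet $S_n$ in at most $|S_n|$ vertices, so a member of $F$ meets $S_n$ only if one of its (finitely many) vertices lies in $S_n$. Because the tribe $F$ is thick, for each $n$ there is a layer $L$ with $|L|$ as large as we please; a member of $L$ that avoids $S_n$ entirely is a copy of $R$, disjoint from everything chosen so far, whose first vertex lies in $X$ by hypothesis. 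Since $R$ is out-oriented, this copy \emph{is} an out-ray starting in $X$, and we take it as $R_{n+1}$.

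The only subtlety is guaranteeing that infinitely many of the chosen rays remain pairwise disjoint rather than collapsing after finitely many steps. Here I would argue as in the Halin proof using the pigeonhole structure of a thick tribe: at stage $n$ the set $S_n$ is finite, so among the many disjoint copies of $R$ in a sufficiently wide layer, at most $|S_n|$ of them can intersect $S_n$ (each intersecting copy must contain a distinct vertex of $S_n$, as the copies within a layer are disjoint). Hence choosing a layer $L$ with $|L| > |S_n|$ guarantees at least one surviving copy disjoint from $S_n$, which furnishes $R_{n+1}$. Iterating produces an infinite family $R_1, R_2, \dots$ of pairwise disjoint out-rays, each with first vertex in $X$, as required.

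The main obstacle I anticipate is purely bookkeeping rather than conceptual: one must verify carefully that ``disjoint from $S_n$'' is enough to keep the growing family pairwise disjoint (it is, since $S_n$ records exactly the vertices already used), and that thickness of $F$ survives the passage to each stage so that arbitrarily wide layers remain available (it does, since thickness is a property of $F$ itself and we never delete layers). The directedness of the statement contributes essentially no difficulty beyond the undirected case, because the hypothesis that $R$ is out-oriented means every copy is automatically an out-ray, and the hypothesis on $X$ transfers verbatim to each extracted copy; the whole content is the finiteness-versus-thickness counting that already drives Halin's argument.
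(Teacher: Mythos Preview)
Your argument has a fatal error: the set $S_n = V(R_1 \cup \dots \cup R_n)$ is \emph{not} finite. Each $R_i$ is a ray, hence has infinitely many vertices, so $S_n$ is infinite for every $n \geq 1$. Consequently the pigeonhole step ``at most $|S_n|$ members of a layer can intersect $S_n$'' gives no bound at all, and there is no reason why any member of a wide layer should avoid $S_n$. (Indeed, it is easy to construct a digraph with a thick out-ray tribe in which every out-ray meets a fixed out-ray $R_1$: then after picking $R_1$ your procedure halts immediately.)

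This is precisely the obstacle that makes Halin's theorem nontrivial, and it is why the paper's proof is not a naive greedy construction. The paper builds each final ray as a nested union of \emph{finite} initial segments: at stage $\ell$ one has rays $R^\ell_1,\dots,R^\ell_\ell$ together with marked vertices $u^\ell_k$ such that only the finite paths $R^\ell_k u^\ell_k$ are locked in, while the tails $u^\ell_k R^\ell_k$ remain subject to revision. Passing to stage $\ell+1$, one takes a layer much larger than the total size of these finite initial segments, discards the few members that hit them, and then --- and this is the real content --- reroutes the undecided tails through the surviving members of the layer via an application of Menger's theorem, freeing up one entire member to serve as $R^{\ell+1}_{\ell+1}$. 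The out-orientation is what makes the Menger step produce dipaths that can be spliced in; this is the one place where directedness actually matters. Your proposal skips this rerouting mechanism entirely, and without it the argument does not go through.
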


\begin{proof}
    We will recursively fix for every $n \in \N^{+}$ a set $\R^n = \braces{R^n_1,\dots, R^n_n}$ of pairwise disjoint out-rays in $D$ and a set of vertices $\braces{u^n_1, \dots, u^n_n}$ such that for every $k \in [n]$:
    \begin{itemize}
        \item $R^n_k$ has its first vertex in $X$,
        \item $u^n_k \in R^n_k $, and
        \item $R^{n}_ku^{n}_k \subsetneq R^{n+1}_ku^{n+1}_k$.
    \end{itemize}
    Then $\braces{\bigcup_{n\geq k} R^n_ku^n_k \colon k \in \N}$ is an infinite set of pairwise disjoint out-rays where each of the rays has its first vertex in $X$.
    
    For $n=1$ we pick one ray $R^1_1 \in \bigcup \tribe$, set $\R^1 := \braces{R^1_1}$ and pick $u^1_1 \in R^1_1$ arbitrarily.
    This satisfies the required properties.

    Now let $\ell \geq 1$ and suppose that for all $i \in [\ell]$ there are sets $\R^i$ and $\braces{u^i_1, \dots, u^i_i}$ subject to the conditions above.
    Consider a layer $F$ of $\tribe$ of size at least $\verts{\bigcup_{i \in [\ell]} R^\ell_i u^\ell_i } + \ell^2 +1$.
    First, we delete from $F$ every ray that meets a path $R^\ell_i u^\ell_i$ for some $i \in [\ell]$.
    Then there are still at least $\ell^2+1$ rays left in $F$.
    Next, we repeatedly check whether there is a ray $R^\ell_i \in \R^\ell$ for which $R^{\ell+1}_i$ has not yet been defined and that meets at most $\ell$ of the remaining elements in $F$.
    If that is the case, we set $R^{\ell+1}_i := R^\ell_i$, choose a vertex $u^{\ell+1}_i$ beyond $u^{\ell}_i$ on $ R^\ell_i$ arbitrarily, and delete the at most $\ell$ many rays from $F$ that have non-empty intersection with $R^{\ell+1}_i$.
    Suppose that after $m \leq \ell$ many steps, every ray in $\R^\ell$ meets either none or more than $\ell$ many rays from the reduced $F$, which we will refer to as $F^\prime$.
    
    Consider the $(\ell-m)$-sized subset $J\subseteq [\ell]$ containing all $j\in [\ell]$ for which $R^{\ell+1}_j$ has not yet been defined.
    Then any ray $R^\ell_j$ with $j\in J$ meets more than $\ell$ rays from $F^\prime$.
    We deleted at most $\verts{\bigcup_{i \in [\ell]} R^\ell_i u^\ell_i }$ rays from $F$ in the first step and at most $m \ell$ in the second step, thus $F^\prime$ has size at least
    \[
        \verts{\bigcup_{i \in [\ell]} R^\ell_i u^\ell_i } + \ell^2 +1 - \verts{\bigcup_{i \in [\ell]} R^\ell_i u^\ell_i } - m\ell = (\ell-m)\ell+1.
    \]
    
    For any ray $R^\ell_j$ with $j\in J$ we fix the vertex $c_j\in R^\ell_j$ which is the first intersection of $R^\ell_j$ with the $\ell$-th ray from $F^\prime$ that it meets.
    Note that $c_j$ lies beyond $u^{\ell}_j$ on $R^{\ell}_j$.
    Then $\bigcup_{j\in J} R^\ell_j c_j$ meets at most $\verts{J}\ell=(\ell-m)\ell$ rays from $F^\prime$.
    Therefore, there is at least one ray left in $F^\prime$ that is disjoint from $\bigcup_{j\in J} R^\ell_j c_j$ and we pick this ray as $R^{\ell+1}_{\ell+1}$.
    We choose an arbitrary vertex $u_{\ell+1}^{\ell+1} \in R_{\ell+1}^{\ell+1}$, define $F^\ast:=F'\setminus\{R_{\ell+1}^{\ell+1}\}$, and write $F^\ast=\{S_i:i\in I\}$ for a suitable index set $I$.

    Now for any ray $S_i\in F^\ast$ we choose a vertex $w_i$ that lies beyond all vertices of $\bigcup_{j \in J} u^\ell_j R^\ell_j c_j$ on $S_i$.
    Consider the finite subdigraph
    \[
        H := \bigcup_{j \in J} u^\ell_j R^\ell_j c_j \cup \bigcup_{i \in I} S_i w_i
    \]
    of $D$.
    Additionally, we define $U:= \braces{u^\ell_j \colon j \in J}$ and $W:= \braces{w_i \colon i \in I}$.
    We show that for any set $Z\subseteq V(H)$ of fewer than $\ell-m$ vertices there is a $U$--$W$ dipath in $H - Z$.
    Indeed, $Z$ misses at least one dipath of the form $u^\ell_j R^\ell_j c_j$ and since there are $\ell \geq \ell-m$ many paths $S_i w_i$ with $u^\ell_j R^\ell_j c_j\cap S_i w_i \neq \emptyset$, at least one such path $S_i w_i$ avoids $Z$.
    Let $v_j$ be the first vertex on $u^\ell_j R^\ell_j c_j$ which lies on $S_i w_i$; then $u^\ell_j R^\ell_j v_j S_i w_i$ is a dipath from $u^\ell_j$ to $w_i$:
    firstly, by the choice of $v_j$ the underlying undirected graph of $u^\ell_j R^\ell_j v_j S_i w_i$ clearly is a path.
    Secondly, the dipath $u^\ell_j R^\ell_j v_j$ is directed from $u^\ell_j$ to $v_j$ since $v_j \in u^\ell_j R^\ell_j$.
    Lastly, the dipath $v_j S_i w_i$ is directed from $v_j$ to $w_i$ since $v_j$ lies in $S_i w_i$.
    Thus by Menger's theorem~\cite{bang2008}*{Theorem~7.3.1}, there is a set $\P$ of $\ell - m = \verts{J}$ pairwise disjoint $U$--$W$ dipaths in $H$.

    For all $j\in J$, we write $P_j$ for the dipath in $\P$ starting at $u^\ell_j$.
    Let $h\colon J \to I$ such that $w_{h(j)}$ is the endvertex of $P_j$ in $W$.
    Now we define
    \[
        R^{\ell+1}_{j} := R^\ell_j u^\ell_j P_j w_{h(j)} S_{h(j)}
    \]
    and $u^{\ell+1}_j := w_{h(j)}$, which clearly fulfils the required properties.
\end{proof}

\begin{lemma}\label{lem:disjoint_initial_segments}
    Let $D$ and $H$ be digraphs and let $\hat{H} \subseteq H $ a finite subdigraph.
    If there exists a thick $H$-tribe $\E$ in $D$, then there is a thick $H$-subtribe $\F$ of $\E$ in $D$ that is forked at $\hat{H}$.
\end{lemma}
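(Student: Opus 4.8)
The plan is to reduce to the case in which $\hat H$ is a single vertex and then build the subtribe recursively, exploiting the fact that the disjointness of the members within each layer severely constrains how a single vertex can be shared between different layers.

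First I would reduce to a single vertex. Write $V(\hat H)=\{\hat h_1,\dots,\hat h_{n_0}\}$. A routine check shows that an $H$-tribe is forked at $\hat H$ if and only if it is forked at each singleton $\{\hat h_i\}$, and that forkedness is inherited by subtribes (the forking condition only quantifies over distinct members). Hence it suffices to prove the lemma when $\hat H$ is a single vertex: applying that case $n_0$ times along a nested sequence $\E=\F_0\supseteq\F_1\supseteq\dots\supseteq\F_{n_0}$, where $\F_i$ is forked at $\{\hat h_i\}$, yields a thick subtribe forked at all of $\hat H$. So assume $\hat H=\{\hat h\}$ and call, for a copy $H'$ with isomorphism $\varphi$, the single vertex $\varphi(\hat h)$ the \emph{marker} of $H'$. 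Forkedness now reads: no member contains the marker of another member. Within a single layer this is automatic, since the members of a layer are disjoint; the entire difficulty is cross-layer.

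The key structural observation is that disjointness within layers bounds the marker incidences between two layers. If $F$ and $F'$ are layers, then, because the members of $F'$ are pairwise disjoint, the marker of each member of $F$ lies in at most one member of $F'$, so there are at most $|F|$ incidences of this kind; symmetrically, the vertex sets (bodies) of the members of $F$ contain at most $|F'|$ of the markers of $F'$ in total. In particular at most four members of $F$ can each contain a quarter of the markers of $F'$, so \emph{heavy} absorbers are rare, and on average a member of $F$ contains only about $|F'|/|F|$ of the markers of $F'$. After passing to a thick subtribe with one layer of each size in a rapidly growing sequence, I would build the forked subtribe recursively, maintaining a thick reservoir of unused layers together with a finite, already-forked set $\mathcal M$ of chosen members. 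At step $k$ I choose a large reservoir layer as a source, discard the finitely many members whose body meets the finite marker set of $\mathcal M$ or whose marker lies in $\bigcup\mathcal M$, select $k$ further members that are \emph{light}, i.e.\ whose bodies contain few of the reservoir's markers (abundant by the averaging bound above, via a suitable weighting of the reservoir layers), and finally thin the reservoir so that no surviving marker lies in the bodies of the newly chosen members, thereby preserving forkedness against all later steps.

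The main obstacle is the backward conflict: a newly chosen marker may lie inside the infinite body of a previously chosen member, and a single such body could a priori swallow the markers of an entire future layer, which would make it impossible to keep the reservoir thick. This is precisely what the disjointness--averaging bound is designed to control: by only ever selecting light members and by taking each source layer large enough relative to $k$, every reservoir layer loses only a controlled fraction of its members at each step, so that the reservoir remains thick throughout. Taking the diagonal union of the chosen layers then produces the desired thick $H$-subtribe $\F$ forked at $\hat H$; arranging the size bookkeeping so that each layer survives with the required number of members, through infinitely many thinnings, is the technical heart of the argument.
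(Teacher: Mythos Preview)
Your reduction to a single marker vertex is valid and a pleasant simplification, and your inductive framework (build $\mathcal M$ while maintaining a thick reservoir whose markers avoid the bodies in $\mathcal M$) is the right shape. The problem is the step you yourself flag as ``the technical heart'': after choosing the $k$ new members $F_k$, you must delete from every remaining reservoir layer $M$ all members whose marker lies in $\bigcup F_k$, and you need the reservoir to stay thick. Your control on this comes from the averaging bound: for a fixed $M$, the members of the source layer $L_k$ together contain at most $|M|$ of the $M$-markers, so on average each body in $L_k$ contains at most $|M|/|L_k|$. But the set of members of $L_k$ that are ``$M$-heavy'' depends on $M$, and there are infinitely many reservoir layers $M$ to protect. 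No weighting of the reservoir layers fixes this: with weights $w_j$ summing to $1$, a member of small total weight still gives only the bound $\text{hits in }M_j\le (\text{weight}/w_j)\cdot|M_j|$, which is vacuous once $w_j$ is small. In other words, a single body in $L_k$ may perfectly well contain every marker of some large reservoir layer, and averaging over $L_k$ does not let you choose $k$ members avoiding this simultaneously for an infinite, thick family of layers. So as written the argument does not go through, and you have not indicated how to close this gap.

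The paper's proof avoids exactly this obstacle by going in the opposite direction on the size of the source layer: it takes $L$ of size only $h+n$ (with $h=|\hat H|$, so $h=1$ in your reduced setting). For each reservoir layer $C$, every member $H''\in C$ has $\hat H''$ meeting at most $h$ members of $L$, hence is compatible with some $n$-subset of $L$; since there are only $\binom{|L|}{n}$ such subsets, pigeonhole on a sufficiently large $C$ yields a common $n$-subset $L_j$ compatible with $\ge j$ members of $C$. As $L$ is fixed and finite, some $n$-subset recurs for infinitely many $j$, and that subset is the new layer $F_n$. The point is that taking the source layer \emph{small} turns the infinite family of compatibility constraints into a finite pigeonhole, which is precisely what your large-source-plus-averaging strategy cannot do.
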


\begin{proof}
    For all $n \in \N$, we recursively define a subset $F_n$ of a layer of $\E$ containing at least $n$ disjoint copies of $H$ in $D$ and a thick subtribe $\E_{n}$ of $\E$ such that
    \begin{enumerate}[label=(\roman*)]
        \item\label{item:F_is_forked} the $H$-tribe $\F_n := \{F_0,\dots,F_n\}$ is forked at $\hat{H}$,
        \item\label{item:two_families_are_forked} for each $H_1 \in \bigcup \E_{n}$ and each $H_2 \in \bigcup \F_{n}$ the digraph $\hat{H}_1$ is disjoint from $H_2$ and the digraph $\hat{H}_2$ is disjoint from $H_1$.
    \end{enumerate}
    In the end, $\{F_n : n \in \N\}$ will be a thick $H$-subtribe of $\E$ satisfying the \namecref{lem:disjoint_initial_segments}.
    For the first step we set $F_0 := \emptyset$ and $\E_0 := \E$.
    Now suppose that $\F_{n-1}$ and $\E_{n-1}$ are already defined.
    Set $h := |\hat{H}|$ and choose a layer $L$ from $\E_{n-1}$ of size at least $h + n$.
    We will choose $F_n$ as an $n$-element subset of $L$.
    Then $\F_n$ will be forked at $\hat{H}$ since \labelcref{item:F_is_forked} and \labelcref{item:two_families_are_forked} hold for $\E_{n -1}$ and $\F_{n -1}$.
    Our task is to find a suitable subset $F_n$ of $L$ and a thick subtribe $\E_n$ of $\E$ such that $\bigcup\F_n$ and $\bigcup \E_n$ satisfy \labelcref{item:two_families_are_forked}.
    We begin by deleting from each layer $M \neq L$ of $\E_{n-1}$ any element that has non-empty intersection with some $H^\prime \in L$ in its subdigraph $\hat{H}^\prime$.
    Note that for every digraph $H^\prime\in L$ there are at most $\verts{\hat{H}^\prime}=h$ many digraphs from $M$ which meet $\hat{H}^\prime$.
    Therefore we delete from every layer of $\E_{n-1}$ at most $h\cdot\verts{L}$ elements and the resulting subtribe $\C$ of $\E_{n-1}$ is still a thick tribe in $D$.
    
    \begin{claim}
        For every $j \in \N$ there is a subset $L_j \subseteq L$ with $\verts{L_j} = n$ and a subset $C_j$ with $\verts{C_j} \geq j$ of a layer of $\C$ such that for any $H_1 \in L_j$ and any $H_2 \in C_j$ the digraph $H_1$ is disjoint from $\hat{H}_2$ and $H_2$ is disjoint from $\hat{H}_1$.
    \end{claim}
    \begin{proof}[Proof of the claim]
        Let $j \in \N$ and $C$ a layer of $\C$ of size at least $j\binom{\verts{L}}{n}$.
        By the construction of $\C$, we only need to find sets $L_j \subseteq L$ and $C_j \subseteq C$ such that no $H_1 \in L_j$ meets any $H_2 \in C_j$ in its subdigraph $\hat{H}_2$.
        For every $H^\prime \in C$, at most $\verts{\hat{H}^\prime}=h$ elements of $L$ meet $\hat{H}^\prime$.
        Since $ \verts{L} \geq h+n$, we can choose for every $H^\prime \in C$ a subset of $n$ elements of $L$ such that each of these does not meet $\hat{H}^\prime$.
        This defines a map $\alpha \colon C \rightarrow \mathfrak{L}:= \braces{L^\prime \subseteq L \colon \verts{L^\prime} = n}$.
        Since $\verts{C}\geq j \binom{\verts{L}}{n}=j\verts{\mathfrak{L}}$, there is a set $L_j\in\mathfrak{L}$ with $\verts{\alpha^{-1}(L_j)}\geq j$ by pigeon hole principle.
        Then $L_j$ and $C_j:=\alpha^{-1}(L_j)$ are as desired.
    \end{proof}
    Since $L$ has only finitely many subsets, there is an infinite strictly increasing sequence $(j_k)_{k\in\N}$ in $\N$ such that the sets $L_{j_k}$ coincide for all $k\in\N$.
    We choose this as the set $F_n$.
    By the claim, $\E_n := \braces{C_{j_k} \colon k \in \N}$ is a thick subtribe of $\E_{n-1}$ satisfying \ref{item:two_families_are_forked}.
    This concludes the proof.
\end{proof}

\begin{proof}[Proof of \Cref{theo:positive_result}]
Let $R$ be a ray where all but finitely many arcs are oriented the same way. Let $D$ be a digraph and assume that $D$ contains arbitrarily many
disjoint copies of $R$, then $D$ contains a thick $R$-tribe $\mathcal{E}$.
We show that $D$ contains infinitely many copies of $R$.
By \Cref{lem:change_orientation} we may assume that all but finitely many arcs of $R$ are out-oriented.

Let $\hat{R}$ be the (connected) subdigraph of $R$ that consists precisely of all finite phases of $R$.
By \Cref{lem:disjoint_initial_segments}, there is a thick subtribe $\tribe$ of $\E$ that is forked at $\hat{R}$.
Consider the set $X$ which contains for any $R'\in\bigcup \tribe$ the first vertex of the out-ray $R'-\hat{R'}$, and the set $Y:=\bigcup_{R'\in\bigcup\tribe}V(\hat{R}')$.
By deleting $\hat{R}'$ from each member $R'$ of $\tribe$, we obtain a thick $(R-\hat{R})$-tribe in $D-Y$.
Hence, by \Cref{theo:ubiquity_with_starting_set} there exists an infinite family $(R_i)_{i\in\N}$ of disjoint out-rays in $D-Y$ such that each $R_i$ starts in a vertex $r_i\in X$.
By definition of $X$, for all $i\in\N$ there is a member $S_i$ of $\tribe$ such that $r_i$ is the first vertex of $S_i-\hat{S_i}$.
Note that $\hat{S_i}$ and $\hat{S_j}$ are disjoint for $i\neq j$ since $\tribe$ is forked at $\hat{R}$.
Finally, by combining each initial segment $\hat{S_i}$ with the out-ray $R_i$, we obtain infinitely many disjoint copies of $R$ in $D$.
\end{proof}

\section{Negative results}\label{section:negative}

In this section, we construct for every oriented ray $R$ with infinitely many turns a digraph $D$ that contains arbitrarily but not infinitely many disjoint copies of $R$.
The construction of $D$ will differ depending on whether the representing sequence of $R$ is bounded (see \Cref{theo:negative_result_bounded_sequence}) or unbounded (see \Cref{theo:negative_result_no_period}).
However, the basic framework for the construction of $D$ will be the same in both proofs as follows:

Let $(R(n,m))_{(n,m)\in I}$ be a family of pairwise disjoint copies of $R$, where
\[I:=\{(n,m) \in \N^2 \colon n \leq m\}.\]
We let
\[D_{-1}:= \bigcup_{(n,m)\in I} R(n,m),\]
\[J := \{ ((n^0,m^0),(n^1,m^1)) \in I^2 \colon m^0 < m^1 \}\]
and fix an arbitrary sequence $((n^0_i,m^0_i),(n^1_i,m^1_i))_{i\in\N}$ in $J$ which contains every element of $J$ infinitely often.
Further, let $(g^0_i,g^1_i)_{i\in\N}$ be a sequence of pairwise disjoint pairs of vertices of $D_{-1}$ with $g^0_i \in R(n^0_i,m^0_i)$ and $g^1_i \in R(n^1_i,m^1_i)$ for all $i \in \N$ and let
$D$ be the digraph obtained from $D_{-1}$ by identifying $g^0_i$ and $g^1_i$ for any $i \in \N$.

We can think of $D$ as having the same arc set as $D_{-1}$ and refer to the ray in $D$ with arc set $E(R(n,m))$ as $R(n,m)$ for any $(n,m) \in I$.

\begin{proposition}\label{prop:following_tail}
Every digraph $D$, constructed as above, contains $k$ disjoint copies of $R$ for all $k \in \N$.
Moreover, if any copy of $R$ in $D$ has a tail in $R(n,m)$ for some $(n,m) \in I$, then $D$ does not contain infinitely many disjoint copies of $R$, and particular, $R$ is non-ubiquitous.
\end{proposition}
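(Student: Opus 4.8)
The plan is to treat the two assertions separately, since the first is essentially immediate from the construction while the second carries the combinatorial content.

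For the first assertion I would exhibit, for each $k \in \N$, the $k$ rays $R(1,k), R(2,k), \dots, R(k,k)$ and check that they are pairwise disjoint in $D$. The only way two of the copies $R(n,m)$ and $R(n',m')$ can meet in $D$ is through an identification class $\{g^0_i, g^1_i\}$ containing a vertex of each; because the pairs $(g^0_i, g^1_i)$ are pairwise disjoint, each such class has exactly two elements, lying in $R(n^0_i, m^0_i)$ and $R(n^1_i, m^1_i)$ with $m^0_i < m^1_i$. Hence no class can straddle two copies on the same level, so $R(1,k), \dots, R(k,k)$ are pairwise disjoint; the same observation shows that no two vertices of a single $R(n,m)$ are ever identified, so each $R(n,m)$ really is a copy of $R$ in $D$. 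As $k$ was arbitrary, $D$ contains $k$ disjoint copies of $R$ for every $k \in \N$.

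For the second assertion I would argue by contradiction, assuming $D$ contains an infinite family $(S_j)_{j\in\N}$ of pairwise disjoint copies of $R$. By hypothesis every copy of $R$ in $D$, in particular each $S_j$, has a tail lying within some ray $R(n_j, m_j)$; since a sub-ray of a ray is a tail, this tail is a tail of $R(n_j, m_j)$, which I shall call the \emph{host} of $S_j$. Distinct members must have distinct hosts: if $S_j$ and $S_{j'}$ shared a host $R(n,m)$, their tails would both be tails of the single ray $R(n,m)$ and would therefore overlap, contradicting disjointness. As only finitely many rays $R(n,m)$ sit on any given level $m$ and the family is infinite, infinitely many distinct levels occur among the hosts; in particular two members $S_a$ and $S_b$ have hosts on different levels, and after relabelling we may assume $m_a < m_b$.

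The crux — and the step I expect to be the main obstacle — is to contradict the disjointness of $S_a$ and $S_b$. Here I would invoke the property that the defining sequence lists every element of $J$ infinitely often: since $m_a < m_b$, the pair $((n_a,m_a),(n_b,m_b))$ belongs to $J$, so there are infinitely many indices $i$ with $(n^0_i,m^0_i)=(n_a,m_a)$ and $(n^1_i,m^1_i)=(n_b,m_b)$. Because the pairs $(g^0_i,g^1_i)$ are pairwise disjoint, these identifications produce infinitely many distinct vertices of $D$ lying simultaneously in $R(n_a,m_a)$ and $R(n_b,m_b)$. Each of the tails $v_a R(n_a,m_a)\subseteq S_a$ and $v_b R(n_b,m_b)\subseteq S_b$ omits only finitely many vertices of its host, so each contains all but finitely many of these shared vertices; two cofinite subsets of an infinite set intersect, so the two tails meet and $S_a\cap S_b\neq\emptyset$, the desired contradiction. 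Combining the two assertions, $D$ contains $k$ disjoint copies of $R$ for every $k$ but not infinitely many, which is exactly what it means for $R$ to be non-ubiquitous. The essential mechanism is the interaction between the ``infinitely often'' listing of $J$ and the fact that a tail of a ray misses only finitely many of that ray's vertices, which together upgrade a single pair of hosts on different levels into a forced intersection.
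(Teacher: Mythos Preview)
Your proof is correct and follows essentially the same approach as the paper's. For the first assertion both arguments exhibit the rays on a fixed level $m$ and use that identifications only occur between distinct levels; for the second, both derive a contradiction from the fact that hosts on different levels share infinitely many identified vertices, with your version simply unpacking more explicitly the step the paper compresses into ``there is a fixed $m^*$'' (and your observation that no two vertices of a single $R(n,m)$ are identified, so each $R(n,m)$ really embeds as a copy of $R$ in $D$, is a point the paper leaves implicit).
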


\begin{proof}
By the choice of $I$ and $J$, we have identified infinitely many vertices of $R{(n,m)}$ and $R{(n',m')}$ if $m \neq m'$ and none otherwise in the construction of $D$.
Hence $D$ contains arbitrarily many disjoint copies of $R$ as the rays $R(0,m),R(1,m),\ldots ,R(m,m)$ are disjoint for all $m\in\N$.
	
Suppose for a contradiction that there is a family $\R$ of infinitely many disjoint copies of $R$ in $D$.
For any copy $R'$ of $R$ in $\mathcal{R}$, there is by assumption $(n,m) \in I$ such that a tail of $R'$ coincides with a tail of $R(n,m)$.
Since infinitely vertices of two rays $R{(n,m)}$ and $R{(n',m')}$ are identified if $m \neq m'$, there is a fixed $m^* \in \N$ such that each ray in $\mathcal{R}$ has a tail identical with a tail of $R(n,m^*)$ for some $n \in \N$.
But $n \leq m^*$ by the definition of $I$.
So tails of the rays in $\mathcal{R}$ are contained in finitely many rays, which contradicts that they are disjoint.
\end{proof}
In the proofs of \Cref{theo:negative_result_bounded_sequence,theo:negative_result_no_period} we will use different strategies to choose the sequences $(g^0_i,g^1_i)_{i\in\N}$ such that any copy of $R$ in $D$ has a tail in $R(n,m)$ for some $(n,m) \in I$.
Then $D$ contains arbitrarily but not infinitely many copies of $R$ by \Cref{prop:following_tail} as desired.

\begin{theorem}\label{theo:negative_result_bounded_sequence}
All rays with a bounded representing sequence are non-ubiquitous.
\end{theorem}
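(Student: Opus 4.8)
The plan is to instantiate the construction preceding \Cref{prop:following_tail} with a careful choice of the pairs $(g^0_i,g^1_i)$ and then invoke that proposition; recall also that by \Cref{lem:change_orientation} we are free to reverse all orientations of $R$. Let $b$ be a bound for the representing sequence, so that every phase of $R$ has length at most $b$. For each $i$ I would take $g^0_i$ to be a turn of the smaller-$m$ copy $R(n^0_i,m^0_i)$ both of whose incident arcs are outgoing, and $g^1_i$ to be a turn of the larger-$m$ copy $R(n^1_i,m^1_i)$ both of whose incident arcs are incoming, chosen deep inside their rays and so far apart that the chosen pairs are disjoint and no two of the identified vertices lie within distance $b$. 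By \Cref{prop:following_tail} it then suffices to show that every copy of $R$ in $D$ has a tail inside a single $R(n,m)$.

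First I would analyse how a copy $R'$ can pass between the rays $R(n,m)$. At an identified vertex $v=g^0_i=g^1_i$ every arc into $v$ belongs to the larger-$m$ ray and every arc out of $v$ belongs to the smaller-$m$ ray. Hence, if $R'$ has a turn at $v$ it uses two arcs of one and the same ray and does not switch rays there; the only way to switch rays at $v$ is to traverse $v$ as a non-turn, entering along an arc of the larger-$m$ ray and leaving along an arc of the smaller-$m$ ray. Since the identified vertices are more than $b$ apart, both of these arcs extend to complete phases, and the non-turn passage splices a whole phase of the larger-$m$ ray with a whole phase of the smaller-$m$ ray into a single phase of $R'$. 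In particular the spliced phase is strictly longer than each of its two parts and has length at least $2$.

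The core of the argument is to convert every such switch into a strict decrease of $m$. Reading $R'$ from its first vertex, the spliced phase is out-oriented precisely when $R'$ continues into the smaller-$m$ ray and in-oriented precisely when it continues into the larger-$m$ ray, so the direction of the switch is dictated by the orientation of the spliced phase. Here I would exploit boundedness to force this orientation: choose the identified turns so that every attainable splice length $L$ is either larger than the maximal phase length of $R$, in which case no switch is possible, or occurs in the representing sequence only at positions of one fixed parity, in which case every spliced phase of length $L$ has the same orientation. Reversing the two turn types (using an incoming turn of the smaller-$m$ ray against an outgoing turn of the larger-$m$ ray), or reversing $R$ via \Cref{lem:change_orientation}, then makes the forced orientation the one for which each switch decreases $m$. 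As $m\ge 0$, a copy $R'$ can switch only finitely often, so after its final switch it has a tail inside a single $R(n,m)$, and \Cref{prop:following_tail} shows that $R$ is non-ubiquitous.

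The step I expect to be the main obstacle is this selection of turns: proving that for every bounded representing sequence one can find infinitely many outgoing and incoming turns whose every possible splice length is either too large to occur in $R$ or occurs at positions of a single parity, and then checking that the resulting change of $m$ really is monotone. This hinges on controlling which lengths arise as sums of two adjacent phase lengths, and the parities of their positions; eventually periodic sequences yield an immediate contradiction from periodicity, while the remaining bounded sequences require a finer study of their phase-length pattern.
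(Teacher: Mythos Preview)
Your setup is sensible and your local analysis at a glue point is correct: with a source turn in the smaller-$m$ ray glued to a sink turn in the larger-$m$ ray, a non-turn passage through the glue point is an out-oriented spliced phase of $R'$ precisely when $R'$ moves from the larger-$m$ ray to the smaller-$m$ ray, and the spliced phase has length $|P^0_x|+|P^1_y|$ for some choice of the two incident phases on each side. The difficulty is exactly where you locate it, and it is a genuine gap rather than a routine verification. You need, for \emph{every} bounded sequence, a choice of infinitely many source turns and infinitely many sink turns such that \emph{all four} splice lengths either exceed $b$ or occur in the representing sequence at positions of one and the same parity. Nothing in your outline forces this: it is easy to produce bounded sequences in which every value $\le b$ occurs at both parities, and then you must argue that some choice of turn pairs makes every splice length exceed $b$; conversely, when small splice lengths are unavoidable, you must rule out the situation where one splice length occurs only at odd positions and another only at even positions, since then $m$ can increase and decrease along $R'$ and your monotonicity argument collapses. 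Your remark that ``eventually periodic sequences yield an immediate contradiction from periodicity'' is not an argument either: periodicity of the phase-length sequence does not by itself prevent $R'$ from weaving between the $R(n,m)$ forever.

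The paper sidesteps this combinatorial problem entirely by a different choice of glue points. It takes $c$ to be the \emph{largest} phase length occurring infinitely often, so that some tail $R''$ of any copy has all phases of length $\le c$. It then glues a sink turn of the smaller-$m$ ray at the \emph{end} of an out-phase of length exactly $c$ to a source turn of the larger-$m$ ray at the \emph{start} of an out-phase of length exactly $c$, with the additional proviso $|R(n^0_i,m^0_i)g_i|>|R(n^1_i,m^1_i)g_i|$. Three of the four possible switches at $g_i$ then splice a length-$c$ phase with something of positive length, producing a phase of length $>c$, which $R''$ cannot contain; the single remaining switch pattern is handled not by parity but by a short ``same direction'' claim exploiting the length inequality. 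The upshot is that no control over which lengths occur at which parities is needed at all. If you want to repair your argument, the cleanest fix is to adopt this choice of $c$ rather than to attempt the parity analysis.
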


\begin{proof}
Let $R$ be an arbitrary ray with a bounded representing sequence.
Let $c$ be the largest natural number that occurs infinitely often in its representing sequence.
By \Cref{lem:change_orientation}, we may assume that infinitely many phases of length $c$ in $R$ are out-oriented.
Further, let $I, J$, $((n^0_i,m^0_i),(n^1_i,m^1_i))_{i\in\N}$ and $D_{-1}$ be as defined in the beginning of this section.

Now we define a sequence $(g^0_i,g^1_i)_{i\in\N}$ of pairwise disjoint pairs of vertices of $D_{-1}$ recursively with $g^0_i \in R(n^0_i,m^0_i)$ and $g^1_i \in R(n^1_i,m^1_i)$ for all $i \in \N$.
If $(g^0_j,g^1_j)$ has been defined for all $j < i$, we pick for $\varepsilon \in \{0,1\}$ the vertex $g^\varepsilon_i$ beyond all vertices $g^0_0,g^1_0,\dots, g^0_{i-1},g^1_{i-1}$ on $R(n^\varepsilon_i,m^\varepsilon_i)$ with the following properties (see \Cref{figure:glueing_bounded_sequence}):
\begin{enumerate}[label=(\roman*)]
    \item\label{item:glueing_prop_1} $g^1_i$ is a turn in $R(n^1_i,m^1_i)$ at the start of an out-oriented phase of length $c$, and
    \item\label{item:glueing_prop_2} $g^0_i$ is a turn in $R(n^0_i,m^0_i)$ at the end of an out-oriented phase of length $c$ with the property that $|R(n^0_i,m^0_i)g^0_i|> |R(n^1_i,m^1_i)g^1_i|$.
\end{enumerate}
This is possible since $R(n^0_i,m^0_i)$ and $R(n^1_i,m^1_i)$ contain infinitely many out-oriented phases of length $c$.
Let $D$ be the digraph constructed from $D_{-1}$ and $(g^0_i,g^1_i)_{i\in\N}$ as defined in the beginning of this section. We write $g_i$ for the vertex $g_i^0=g_i^1$ in $D$.

\begin{figure}[ht]
    \center
	\begin{tikzpicture}

		\draw[arc] (0.9,0) to (0.1,0);
		\draw[arc] (1.1,0) to (1.9,0);
		\draw[arc] (2.1,0) to (2.9,0);
		\draw[arc] (3.1,0) to (3.9,0);
		\draw[arc] (4.9,0) to (4.1,0);
		
		\draw[dotted] (-0.5,0) to (-0.1,0);
		\draw[arc] (5.3,0) to (5.1,0);  
	    \draw[path fading=east] (5.1,0) to (6,0);

		\draw[arc] (4,-0.1) to (4,-0.9);
		\draw[arc] (4,0.1) to (4,0.9);
		\draw[arc] (4,1.1) to (4,1.9);
		\draw[arc] (4,2.1) to (4,2.9);
		\draw[arc] (4,3.9) to (4,3.1);
  
		\draw[arc] (4,4.3) to (4,4.1);
		\draw[path fading=north] (4,5) to (4,4.1);
		\draw[dotted] (4,-1.1) to (4,-1.5);

		\draw (-0.6,0) node {\huge.};
		\draw (4,-1.6) node {\huge.};
		\draw (0,0) node {\huge.};
		\draw[red] (1,0) node {\huge.};
		\draw (2,0) node {\huge.};
		\draw (3,0) node {\huge.};
		\draw (5,0) node {\huge.};
		\draw[blue] (4,0) node {\huge.};
		\draw (4,-1) node {\huge.};
		
		\draw (4,2) node {\huge.};
		\draw (4,1) node {\huge.};
		\draw[red] (4,3) node {\huge.};
		\draw (4,4) node {\huge.};
		\draw[blue] (4.5,-0.5) node {$g_i$};
		\draw (-2,0) node {$R(n^0_i,m^0_i)$};
		\draw (4,-2) node {$R(n^1_i,m^1_i)$};
		
	\end{tikzpicture}
	\caption{Example of a vertex $g^0_i = g^1_i = g_i$ in $D$ for $c=3$.}
	\label{figure:glueing_bounded_sequence}
\end{figure}
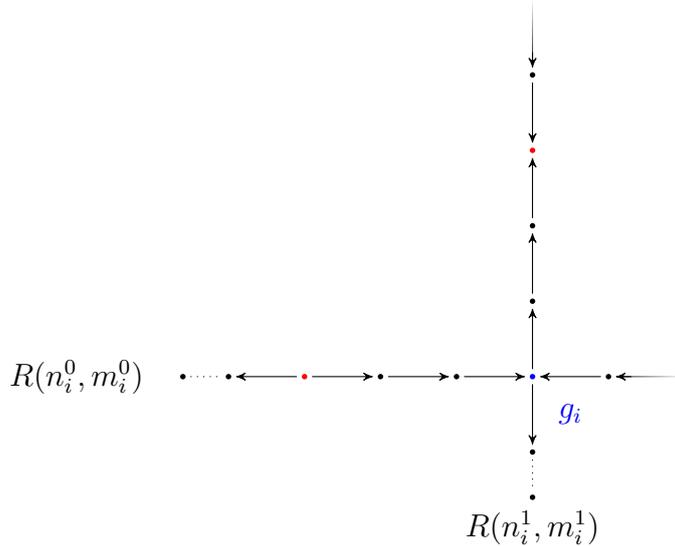

By \Cref{prop:following_tail}, it suffices to prove that for any copy $R'$ of $R$ in $D$ there is $(n,m) \in I$ such that a tail of $R'$ coincides with a tail of $R(n,m)$.
Begin by fixing an arbitrary copy $R'$ of $R$ in $D$.
Note that, by the construction of
$D$, $R'$ consists of (possibly infinite) segments contained in rays $R(n,m)$ for $(n,m) \in I$, and $R'$ can switch from $R(n,m)$ to $R(n',m')$ with $(n,m) \neq (n',m')$ only at some identification vertex $g_i$.
Since in the construction of $D$ we only identified turns and $c$ is the largest number which occurs infinitely often in the representing sequence of $R$, there is a tail $R''$ of $R'$ whose representing sequence contains only numbers up to $c$ and whose initial vertex is a turn of some ray $R(n^*,m^*)$ for $(n^*,m^*) \in I$. Thus each phase of any $R(n',m')$ for $(n',m') \in I$ is either completely traversed by $R''$ or all arcs of this phase are avoided by $R''$.

Let $i \in \N$ be arbitrary.
By properties \ref{item:glueing_prop_1} and \ref{item:glueing_prop_2}, any ray in $D$ traversing both the phase of $R(n^0_i, m^0_i) g_i $ incident with $g_i$ and an arc of $R(n^1_i,m^1_i)$ incident with $g_i$ contains a phase of length $>c$ (see \Cref{figure:glueing_bounded_sequence}).
Clearly, the same holds for a ray traversing both, the phase of $g_i R(n^1_i, m^1_i) $ incident with $g_i$ and an arc of $R(n^0_i,m^0_i)$ incident with $g_i$.
Recall from the construction of $D$ that the vertex $g_i$ has degree four.
Thus if $R''$ contains $g_i$ as an inner vertex, exactly one of the following properties holds:
\begin{enumerate}[label=(\arabic*)]
    \item both arcs of $R''$ incident with $g_i$ are contained in $R(n_i^0,m_i^0)$, or
    \item both arcs of $R''$ incident with $g_i$ are contained in $R(n_i^1,m_i^1)$, or
    \item one arc of $R''$ incident with $g_i$ is contained in $R(n^1_i,m^1_i) g_i$ and one is contained in $g_i R(n^0_i, m^0_i)$. \label{enum:properties_of_rays_3}
\end{enumerate}

This feature restricts in which way $R''$ is embedded into $D$.
First we observe:
\begin{claim} \label{clm:same_direction}
For any $(n,m) \in I$ and any arc $a \in A(R(n,m)) \cap A(R'')$, the rays $R(n,m)$ and $R''$ traverse $a$ in the same direction.
\end{claim}
\begin{proof}[Proof of the claim]
Suppose this does not hold.
Pick $(n,m)\in I$ and an arc $a \in A(R(n,m)) \cap A(R'')$ with $|R(n,m)a|$ minimal such that $a$ contradicts this property (see \Cref{fig:same_direction}).
Let $u, v$ be the endvertices of $a$ such that $|R(n,m)u|<|R(n,m)v|$ applies.
By minimality of $a$, the other arc of $R''$ incident with $u$ is not contained in $R(n,m)u$.
Therefore $u = g_i$ for some $i \in \N$, i.e.\ there is $(n',m')\in I$ with $m \neq m'$ such that $u \in V(R(n,m)) \cap V(R(n',m'))$.
Then \labelcref{enum:properties_of_rays_3} holds for $R''$ at the vertex $u$.
As $a \in A(uR(n,m))$, the inequality $m < m'$ holds by definition of $J$ and the other arc incident with $u$ in $R''$ is contained in $R(n',m')u$.
The rays $R''$ and $R(n',m')$ traverse this arc in opposite directions, but $|R(n',m')u|<|R(n,m)u|$ holds by property \ref{item:glueing_prop_2} of the construction.
This contradicts the minimality of $\vert R(n,m)a \vert$.
\end{proof}

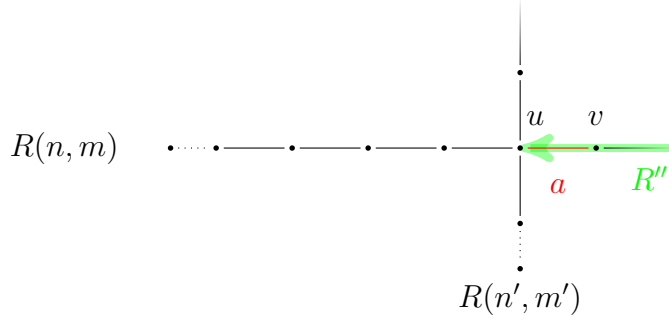
\begin{figure}[ht]
    \center
	\begin{tikzpicture}

        \draw[arc,line width=3pt,opacity=0.4,color=green] (6,0) to (4,0);

		\draw[] (0.9,0) to (0.1,0);
		\draw[] (1.1,0) to (1.9,0);
		\draw[] (2.1,0) to (2.9,0);
		\draw[] (3.1,0) to (3.9,0);
		\draw[red] (4.9,0) to (4.1,0);
		
		\draw[dotted] (-0.5,0) to (-0.1,0);
		\draw[path fading=east] (5.1,0) to (6,0);

		\draw[] (4,-0.1) to (4,-0.9);
		\draw[] (4,0.1) to (4,0.9);
	
		\draw[path fading=north] (4,2) to (4,1.1);
		\draw[dotted] (4,-1.1) to (4,-1.5);

		\draw (-0.6,0) node {\huge.};
		\draw (4,-1.6) node {\huge.};
		\draw (0,0) node {\huge.};
		\draw[] (1,0) node {\huge.};
		\draw (2,0) node {\huge.};
		\draw (3,0) node {\huge.};
		\draw (5,0) node {\huge.};
		\draw[] (4,0) node {\huge.};
		\draw (4,-1) node {\huge.};

		\draw (4,1) node {\huge.};
		\draw[] (4.2,0.4) node {$u$};
		\draw[] (5.,0.4) node {$v$};
		\draw[red] (4.5,-0.5) node {$a$};
		\draw (-2,0) node {$R(n,m)$};
		\draw (4,-2) node {$R(n',m')$};
  		\draw (5.7,-0.4) node {\textcolor{green}{$R''$}};
		
	\end{tikzpicture}
	\caption{The arc $a$ and the ray $R''$ in the proof of the claim for \Cref{theo:negative_result_bounded_sequence}.}
	\label{fig:same_direction}
\end{figure}

Now let $m \in \N$ be the smallest number such that there is $(n,m) \in I$ with $A(R'') \cap A(R{(n,m)}) \neq \emptyset$ and let $a$ be an element of this intersection.
We prove that $aR''$ coincides with $aR(n,m)$.
Suppose not and let $(n', m') \neq (n,m) \in I$ and $u \in V(R(n,m)) \cap V(R(n',m'))$ such that $u$ is incident with the first arc $b$ of $aR''$ not contained in $A(R(n,m))$.
Property \labelcref{enum:properties_of_rays_3} applies to $u$.
By minimality of $m$ we have $m<m'$ and thus $b \in A(R(n',m')u)$ by \labelcref{enum:properties_of_rays_3}.
Therefore $aR''$ and $R(n',m')$ traverse the arc $b$ in opposite directions, which contradicts the claim above.
Thus $R'$ has a tail that coincides with a tail of $R(n,m)$.
This completes the proof.
\end{proof}

It is left to prove \Cref{theo:negative_result_no_period}, for which we need the following Lemma:

\begin{lemma}\label{lem:unbounded_means_non-isomorphic}
Let $R$ be a ray with an unbounded representing sequence.
Then the tails $vR$ and $wR$ are non-isomorphic for all $v \neq w \in R$.
\end{lemma}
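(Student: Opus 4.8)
The plan is to reduce the statement to a purely combinatorial fact about the \emph{orientation pattern} of $R$. First I would fix the enumeration $x_0, x_1, x_2, \dots$ of the vertices of $R$ along the ray, with $x_0$ the unique vertex of undirected degree $1$, and record for each $j \geq 1$ a symbol $\sigma_j \in \{+,-\}$ describing whether the arc between $x_{j-1}$ and $x_j$ points in the direction of increasing or of decreasing index. With this encoding, the representing sequence of $R$ is exactly the sequence of lengths of the maximal constant blocks (runs) of $(\sigma_j)_{j \geq 1}$, and since $R$ possesses a representing sequence at all, $(\sigma_j)$ takes both values $+$ and $-$ infinitely often.

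Next I would observe that every tail of $R$ is itself an oriented ray, whose underlying undirected graph is a one-way infinite path. Such a path is rigid: any of its automorphisms fixes the unique degree-$1$ vertex and is then forced to be the identity, and any graph isomorphism between two such paths must send the unique degree-$1$ vertex to the unique degree-$1$ vertex. Consequently, writing $v = x_k$ and $w = x_\ell$ with (without loss of generality) $k < \ell$, any isomorphism $vR \to wR$ is necessarily the index shift $x_{k+i} \mapsto x_{\ell+i}$; being a digraph isomorphism, it must in addition preserve arc directions. Hence $vR \cong wR$ holds if and only if $\sigma_{k+i} = \sigma_{\ell+i}$ for every $i \geq 1$, that is, if and only if the sequence $(\sigma_j)_{j > k}$ is periodic with period $p := \ell - k$.

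The heart of the argument is then to show that an eventually periodic orientation pattern forces a \emph{bounded} representing sequence, contradicting the hypothesis. I would argue that in a sequence which is periodic with period $p$ from some index onward, and which takes both values infinitely often, no run can have length $p$ or more: any $p$ consecutive equal terms realise every residue class modulo $p$ exactly once, so they would force the entire period block to be constant, contradicting that both symbols occur. Since only finitely many runs begin before the periodic part sets in, all but finitely many terms of the representing sequence are then at most $p-1$, so the representing sequence is bounded. Taking the contrapositive, an unbounded representing sequence rules out $\sigma_{k+i} = \sigma_{\ell+i}$ for all $i$, whence $vR \not\cong wR$ for every pair $v \neq w \in R$.

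I expect the main care to lie in two places: justifying that an isomorphism of the tails is necessarily the index shift, so that the orientation condition is both necessary and sufficient, and pinning down the run-length bound cleanly enough that the passage from \emph{eventually periodic pattern} to \emph{bounded representing sequence} is airtight. Both steps are elementary and, notably, neither relies on any of the tribe machinery developed earlier in the paper.
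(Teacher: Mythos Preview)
Your proposal is correct and follows essentially the same route as the paper: observe that any isomorphism $vR\to wR$ must send $v$ to $w$ (hence is the index shift), deduce that the orientation pattern of $vR$ is periodic with period $d_R(v,w)$, and conclude that the representing sequence is bounded. Your version makes explicit the $\{+,-\}$-encoding and the run-length bound where the paper simply asserts that ``the representing sequence of $vR$ is periodic'' and hence bounded, but the underlying argument is the same.
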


\begin{proof}
Let $R$ be a ray with an unbounded representing sequence.
Suppose that there are $v\neq w \in R$ such that $w$ lies beyond $v$ on $R$ and there is an isomorphism $\varphi \colon vR \rightarrow wR$. Clearly, we have $\varphi(v)=w$ and the paths $\varphi^n(v)R\varphi^{n+1}(v)$ are isomorphic for all $n\in\N$.
Therefore, the representing sequence of $vR$ is periodic.
Thus the representing sequence of $R$ is bounded, a contradiction.
\end{proof}

\begin{theorem}\label{theo:negative_result_no_period}
All rays with an unbounded representing sequence are non-ubiquitous.
\end{theorem}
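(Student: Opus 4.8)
The plan is to use the general framework set up at the start of this section together with \Cref{prop:following_tail}: it suffices to choose the sequence $(g^0_i,g^1_i)_{i\in\N}$ recursively so that every copy $R'$ of $R$ in the resulting digraph $D$ has a tail coinciding with a tail of some $R(n,m)$. As in the proof of \Cref{theo:negative_result_bounded_sequence}, I would identify only \emph{turns}, and I would orient them exactly as in properties \ref{item:glueing_prop_1} and \ref{item:glueing_prop_2} there, so that each identification vertex $g_i$ again has in-degree and out-degree two; consequently a copy of $R$ can switch between $R(n^0_i,m^0_i)$ and $R(n^1_i,m^1_i)$ at $g_i$ only by passing \emph{through} $g_i$ rather than turning there. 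I would also retain the position inequality $|R(n^0_i,m^0_i)g^0_i|>|R(n^1_i,m^1_i)g^1_i|$ from \ref{item:glueing_prop_2}, as this is what drives the same-direction argument.

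The essential difference from the bounded case is that there is no largest recurring phase length $c$ with which to forbid switches, since an unbounded representing sequence admits arbitrarily long legal phases. This is where the recursive choice enters. At step $i$, after $g_0,\dots,g_{i-1}$ have been fixed, I would place $g^0_i$ and $g^1_i$ beyond all previously chosen identification vertices on their respective rays, and I would use the unboundedness of the representing sequence to select the identified turns so that the out-phase of $R(n^1_i,m^1_i)$ beginning at $g^1_i$ is strictly longer than every phase occurring before any of $g_0,\dots,g_{i-1}$ on any ray. In this way the phase lengths produced at successive identifications grow without bound, so that each sufficiently large phase length is, in effect, introduced at a single place in the construction, while every pair of $J$ is still used infinitely often.

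For the verification I would fix a copy $R'$ of $R$ in $D$ and first reprove the analogue of \Cref{clm:same_direction}: by a minimality argument on $|R(n,m)a|$ using the position inequality, $R'$ traverses every arc in the same direction as the underlying ray $R(n,m)$. Granting this, I would argue that $R'$ can switch between rays only finitely often. Suppose not; then $R'$ passes through infinitely many identification vertices as through-vertices, each splicing the tail of a long phase of one ray onto the start of a long phase of another. Here I would invoke \Cref{lem:unbounded_means_non-isomorphic}: the growing, essentially non-repeating phase lengths forced by the recursive construction mean that an infinitely switching $R'$ would realise an isomorphism between two distinct tails of $R$, which the lemma forbids. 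Hence $R'$ switches only finitely often and therefore has a tail inside a single $R(n,m)$, and \Cref{prop:following_tail} finishes the proof.

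I expect the main obstacle to be the design of the recursion together with the precise deduction of the contradiction from \Cref{lem:unbounded_means_non-isomorphic}. In the bounded case a single threshold $c$ rules out switching outright; here, because arbitrarily long phases are legal, no local length obstruction exists, and the entire difficulty is to convert ``infinitely many switches'' into a genuine periodicity of a tail of the representing sequence, contradicting the aperiodicity guaranteed by \Cref{lem:unbounded_means_non-isomorphic}. Arranging the recursion so that the relevant phase lengths are unrepeatable in exactly this sense, while still keeping every element of $J$ available infinitely often, is the delicate point.
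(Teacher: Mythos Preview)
Your plan has a genuine gap at exactly the two places you yourself flag as delicate, and the first one is fatal for the approach as stated.

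\textbf{The same-direction claim does not transfer.} In the bounded proof the minimality argument for \Cref{clm:same_direction} does not rest on the position inequality alone. Before the claim the paper restricts, at each $g_i$, the possible ways $R''$ can pass through to the three options (1), (2), (3); this restriction is obtained by observing that any other combination at $g_i$ would create a phase of length $>c$ in $R''$. Only after this restriction does the minimality argument go through: one uses (3) to force the ``other'' arc at $u=g_i$ into $R(n',m')u$, and then the position inequality gives the contradiction. In the unbounded case all four mixed combinations at $g_i$ (splicing either side of $R(n^0_i,m^0_i)$ with either side of $R(n^1_i,m^1_i)$) remain admissible, because the resulting spliced phase can have any length and $R$ contains phases of every large length. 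Your growth condition (the out-phase at $g^1_i$ exceeds all phases before $g_0,\dots,g_{i-1}$) does not exclude these options either: it only says this length is large, not that it is absent from the representing sequence of $R$. With options other than (3) available, the minimality step ``the other arc at $u$ lies in $R(n',m')u$'' fails, and the claim collapses.

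\textbf{The periodicity step is not justified.} Even granting same-direction, you have not explained why infinitely many switches would yield an isomorphism between two \emph{distinct} tails of $R$. A switching copy $R'$ is always isomorphic to $R$ by hypothesis; what you need is that some specific switch forces $vR\cong wR$ for $v\neq w$, and nothing in your construction pins down the positions well enough to extract this.

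The paper's proof takes a different route that avoids both issues. It does \emph{not} identify turns and does \emph{not} attempt a same-direction claim. Instead it identifies interior vertices of long phases, chosen so that (ii) the four distances from $g_i$ to the adjacent turns are pairwise distinct, and (i) every path in $D_i$ from $\{v_0,\dots,v_i\}$ to $g_i$ isomorphic to an initial segment of $R$ has one of two prescribed lengths. Property~(i) is arranged recursively by exploiting that $D_{i-1}$ has only finitely many $v$--$w$ paths, together with \Cref{lem:unbounded_means_non-isomorphic}, to push $g_i$ far enough along. Given a copy $R'$ starting at $v_k$, property~(i) fixes the position of the first late $g_i$ inside $R'$; then property~(ii) forces the next phase of $R'$ to agree with that of the correct $R(n^*,m^*)$, so $R'$ cannot leave it. This is a global length argument rather than a local orientation argument, and it is precisely what is needed once the phase-length cap $c$ is gone.
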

\begin{proof}
Let $R$ be an arbitrary ray with an unbounded representing sequence.
Further, let $I, J, ((n^0_i,m^0_i),(n^1_i,m^1_i))_{i\in\N}$ and $D_{-1}$ be as defined in the beginning of this section.

We define a sequence $(g^0_i,g^1_i)_{i\in\N}$ of pairwise disjoint pairs of vertices of $D_{-1}$ recursively with $g^0_i \in R(n^0_i,m^0_i)$ and $g^1_i \in R(n^1_i,m^1_i)$ for all $i \in \N$.
We fix an enumeration $\{v_0,v_1,\ldots\}$ of $V(D_{-1})$.
Denote by $D_i$ the digraph obtained from $D_{-1}$ by identifying the vertices $g^0_\ell$ and $g^1_\ell$ for all $\ell\leq i$ and write $g_\ell$ for the vertex $g^0_\ell=g^1_\ell$.
When a vertex $v_j$ is identified with a vertex $v_k$ in this process, we call the new identification vertex both $v_j$ and $v_k$.
We will make sure that the following holds for all $i\in\N$:

\begin{enumerate}[label=(\roman*)]
\item\label{item:length_of_path_to_g} Let $k\leq i$ and let $S$ be a $v_k$--$g_i$ path in $D_i$ which is isomorphic to an initial segment of $R$.
Then $|S|=|R(n^0_i,m^0_i)g_i|$ or $|S|=|R(n^1_i,m^1_i)g_i|$.
\item\label{item:distance_to_endvertices} $g_i$ is not a turn of $R(n^0_i,m^0_i)$ or $R(n^1_i,m^1_i)$ and thus $g_i$ lies in the interior of phases $M^0$ of $R(n^0_i,m^0_i)$ and $M^1$ of $R(n^1_i,m^1_i)$.
Let $t^\eps_0,t^\eps_1$ be the two turns which are endvertices of $M^\eps$. Then the four numbers $d_{R(n^\eps_i,m^\eps_i)}(t^\eps_\delta,g_i)$ for $\delta,\eps\in\{0,1\}$ are pairwise distinct. Furthermore, $M^0$ and $M^1$ do not contain any $g_j$ with $j\neq i$.
\end{enumerate}

Let us first derive from the existence of a sequence $(g^0_i,g^1_i)_{i\in\N}$ as above that the digraph $D$, constructed from $D_{-1}$ and $(g^0_i,g^1_i)_{i\in\N}$ as in the beginning of this section, has the property that any copy of $R$ in $D$ has a tail that is contained in some $R(n,m)$.
Then \Cref{prop:following_tail} ensures that $D$ contains arbitrarily but not infinitely many disjoint copies of $R$.

We know that $R'$ traverses infinitely many vertices $g_j$ with $j\in\N$ (but $R'$ does not necessarily swap from one ray of the form $R(n,m)$ to another at every $g_j$) because every ray $R(n,m)$ is glued together with other rays at infinitely many vertices in $D$.
Suppose that $R'$ starts in $v_k$ and let $g_i$ be the first vertex of $R'$ which lies in $\{g_j:j\geq k\}$.
Then the path $R'g_i$ is a subdigraph of $D_i$ as it contains no vertex $g_j$ with $j>i$.
Hence by \ref{item:length_of_path_to_g}, there is a ray $R(n^*,m^*)$ containing $g_i$ with $|R'g_i|=|R(n^*,m^*)g_i|$.
The two rays $R'$ and
	$R(n^*,m^*)$ are isomorphic as both are isomorphic to $R$, so the two initial
	segments of the same length are isomorphic, and so the tails $g_iR'$ and
	$g_iR(n^*,m^*)$ are also isomorphic. Now assume for a contradiction that
	$g_iR'$ exits $g_iR(n^*,m^*)$ and let $g_{i'}$ be the first vertex where $g_iR'$ exists $g_iR(n^*,m^*)$.
	Note that $g_iR'g_{i'}$ and $g_iR(n^*,m^*)g_{i'}$ are identical.
    From this and the
	isomorphism found before of the two tails beginning at $g_i$, it follows that
	the tails $g_{i'}R'$ and $g_{i'}R(n^*,m^*)$ are also isomorphic. Thus, it follows from
	\ref{item:distance_to_endvertices} that $R'$ has to exit $g_{i'}$ through the segment $g_{i'}R(n^*,m^*)t_1^*$, where
	$t_1^*$ is the end-vertex beyond $g_{i'}$ of the phase containing $g_{i'}$ in $R(n^*,m^*)$,
	because this is the segment that has the correct length of the four segments
	of pairwise distinct lengths incident with $g_{i'}$. This means $g_iR'$ does not exit
	$g_iR(n^*,m^*)$ at $g_{i'}$, a contradiction.

All that remains is to define $(g^0_i,g^1_i)$ for all $i\in\N$.
Suppose that $(g^0_j,g^1_j)$ is already defined for all $j<i$.
We write $R^\eps:=R(n^\eps_i,m^\eps_i)$ for $\eps\in\{0,1\}$.
Our task is to specify suitable vertices $g^\eps_i\in R^\eps$.
Let $x$ be a vertex of $R^0$ that lies beyond all phases of $R^0$ that contain any vertex of $\{ g_0,\ldots,g_{i-1},v_0,\ldots,v_i\}$.
Let $\P$ be the set of all $\{v_0,\ldots,v_i\}$--$x$ paths in $D_{i-1}$ which are isomorphic to initial segments of $R$.
Then none of the vertices on $R^0$ beyond $x$ are identified with other vertices, so
\begin{itemize}
\item[$(*)$] for any $y \in xR^0$, every $\{v_0, \dots , v_k\}$--$y$ path is of the
form $PxR^0y$ for some $P \in \mathcal{P}$.
\end{itemize}
The following claim implies that $\P$ is finite:

\begin{claim}
For all $j\in\N$ and for all vertices $v,w\in D_j$, the set of $v$--$w$ paths in $D_j$ is finite.
\end{claim}

\begin{proof}[Proof of the claim]
We use induction on $j$.
The claim holds for $j = -1$ as $D_{-1}$ is a disjoint union of rays.
Now let $j \geq 0$ and consider an arbitrary $v$--$w$ path $P$ in $D_j$.
If $P$ does not use the vertex $g_j$, then $P$ is also a $v$--$w$ path in $D_{j-1}$, and there are only finitely many such paths by induction.
Otherwise $g_j$ lies on $P$, and $P$ consists of a $v$--$g_j$ path $Q$ concatenated with a $g_j$--$w$ path $Q'$ in $D_j$.
Since $Q$ and $Q'$ are also paths in $D_{j-1}$, there are only finitely many possibilities for $Q$ and $Q'$ by induction.
\end{proof}

We write $\Q$ for the subset of $\P$ consisting of all paths $Q$ with $|Q|\neq|R^0x|$.
Our next step is to find a vertex $z^0$ such that
\begin{itemize}
\item[$(\dagger)$]\label{item:z^0} $z^0$ lies beyond $x$ on $R^0$ and no path from $\Q$ can be extended to a $\{v_0, \ldots, v_i\}$--$z^0$ path in $D_{i-1}$ which is isomorphic to an initial segment of $R$.
\end{itemize}
Let $Q \in \Q$.
Since $|Q| \neq |R^0x|$, then $q \neq x$ where $q$ is the end-vertex (other than the origin of $R^0$) of the initial segment to which $Q$ is isomorphic in $R^0$.
Then by \Cref{lem:unbounded_means_non-isomorphic}, $qR^0$ and $xR^0$ are non-isomorphic. So there is an initial segment of $xR^0$ that is not isomorphic to an initial segment of $qR^0$.
Let $xR^0y_Q$ be such a segment, then $QxR^0y_Q$ is not isomorphic to an initial segment of $R^0$, and hence also not to an initial segment of $R$.
Let $z^0$ be the latest vertex on $R^0$ that is of the form $y_Q$ for some $Q \in \Q$ if $\Q$ is non-empty or let $z^0 := x$ if $\Q$ is empty.
Then by $(*)$, for every $Q \in \Q$ the path $QxR^0z^0$ contains $QxR^0y_Q$ as a subpath.
Thus it follows from the choice of $y_Q$ that $QxR^0z^0$ cannot be isomorphic to an initial segment of $R$.
This shows that our choice of $z^0$ satisfies $(\dagger)$.
Similarly, define a vertex $z^1\in R^1$.

Next, we show that $D_i$ will satisfy \ref{item:length_of_path_to_g} for every choice of vertices $g^\eps_i$ on $R^\eps$ that lie beyond $z^\eps$ on $R^\eps$ for $\eps\in\{0,1\}$.
So suppose we have already fixed vertices $g_i^\eps$ as above and glued them together.
Now consider any $k \leq i$ and any $v_k$--$g_i$ path $S$ in $D_i$ which is isomorphic to an initial segment of $R$.
Then $S$ is also a $v_k$--$g_i^0$ or a $v_k$--$g_i^1$ path in $D_{i-1}$; suppose without loss of generality that the former holds.
Let $v^*$ be the last vertex of $S$ that is contained in the set $\{g_0, \ldots, g_{i-1}, v_0, \ldots, v_i\}$.
Since $z^0$ lies beyond $v^*$ and $g_i^0$ lies beyond $z^0$ on $R^0$, it follows that $S$ must contain $z^0$.
Then $Sx$ is contained in $\P$ but not in $\Q$ by $(\dagger)$ as $Sz^0$ is isomorphic to an initial segment of $R$.
Therefore $|Sx| = |R^0x|$, so $|S| = |R^0g_i|$, which proves \ref{item:length_of_path_to_g}.

Finally, we further specify the choice of $g^0_i$ and $g^1_i$ so that \ref{item:distance_to_endvertices} holds.
Recall that the representing sequence of $R$ is unbounded.
Therefore, we can find a phase $M^0$ of $R^0$ which is contained in $z^0R^0$ and has length at least 3.
We choose $g^0_i$ as an interior vertex of $M^0$ so that $g^0_i$ has different distances to both endvertices of $M^0$.
Next, find a phase $M^1$ of $R^1$ which is contained in $z^1R^1$ such that $|M^1|\geq 2|M^0|+1$.
Then \ref{item:distance_to_endvertices} is fulfilled for a vertex $g^1_i$ in $M^1$ which has distance $|M^0|$ to one endvertex of $M^1$ and hence distance $>|M^0|$ to its other endvertex.
\end{proof}

\medskip
\bibliography{ref}

\end{document}